\newtheorem{thm}{Theorem}
\newtheorem{prop}[thm]{Proposition}
\newtheorem{lem}[thm]{Lemma}
\theoremstyle{remark}
\newtheorem{rem}[thm]{Remark}
\newcommand{\FF}{\mathbb{F}}
\newcommand{\0}{\mathbf{0}}
\newcommand{\cC}{\mathcal{C}}
\DeclareMathOperator{\wt}{wt}
\begin{document}

\title[Ternary codes of length $12$]
{On the classification of certain ternary codes of length $12$}

\author{Makoto Araya}
\address{
Department of Computer Science, Faculty of Informatics,
Shizuoka University,
Hamamatsu 432--8011, Japan}
\email{araya@inf.shizuoka.ac.jp}
\author{Masaaki Harada}
\address{
Research Center for Pure and Applied Mathematics, 
Graduate School of Information Sciences, 
Tohoku University, Sendai 980--8579, Japan}
\email{mharada@m.tohoku.ac.jp}

\subjclass[2010]{Primary: 94B05, Secondary: 11T71}
\keywords{ternary code, classification, weight enumerator}

\begin{abstract}
Shimada and Zhang studied the existence of
polarizations on some supersingular $K3$ surfaces
by reducing the existence of the polarizations 
to that of ternary $[12,5]$ codes 
satisfying certain conditions.
In this note, we give a classification
of ternary $[12,5]$ codes satisfying the conditions.
To do this,
ternary $[10,5]$ codes are classified for 
minimum weights $3$ and $4$.
\end{abstract}

\maketitle

\section{Introduction}\label{Sec:1}

A ternary $[n,k]$ code $C$ is a $k$-dimensional vector subspace
of $\FF_3^n$,
where $\FF_3$ denotes the finite field of order $3$.
The weight $\wt(x)$ of a vector $x$ is
the number of non-zero components of $x$.
The minimum non-zero weight of all codewords in $C$ is called
the minimum weight of $C$.
A ternary $[n,k,d]$ code is a ternary $[n,k]$ code with minimum weight $d$.
Throughout this note, 
we denote the minimum weight of a code $C$ by $d(C)$.

Shimada and Zhang~\cite{Shimada} studied 
the existence of
polarizations on 
the supersingular $K3$ surfaces in characteristic $3$
with Artin invariant $1$
(see~\cite[Theorem~1.5]{Shimada} for the details).
This was done by
reducing the problem of the existence of
the polarizations 
to a problem of the existence of ternary $[12,5]$ codes $C$ 
satisfying the following
conditions:
\begin{align}
\label{C1}
& \wt((x_1,x_2,\ldots,x_{10}))\equiv y_1y_2 \pmod 3,
\\ 
\label{C2}
& 
\text{ if } c 
\text{ is not the zero vector, } \text{ then }
\wt((x_1,x_2,\ldots,x_{10})) \ge 3,
\\ 
\label{C3}
&
\text{ if }\wt((x_1,x_2,\ldots,x_{10}))=3,
\text{ then } (y_1,y_2) \ne (0,0), 
\end{align}
for any codeword 
$c=(x_1,x_2,\ldots,x_{10},y_1,y_2) \in C$ (see~\cite[Claim~5.2]{Shimada}).
Seven ternary $[12,5]$ codes satisfying the conditions
(\ref{C1})--(\ref{C3}) were found by
Shimada and Zhang~\cite{Shimada}.
This motivates us to classify all such ternary $[12,5]$ codes.

For ternary $[12,5]$ codes satisfying the conditions
(\ref{C1})--(\ref{C3}),
the following equivalence is considered in~\cite{Shimada}.
We say that two ternary $[12,5]$ codes satisfying the conditions
(\ref{C1})--(\ref{C3})
are {\em SZ-equivalent} if one can be obtained from the
other by using the following:
\begin{equation}\label{eq:eq}
\begin{split}
&(x_1,\ldots,x_{10},y_1,y_2) 
\\
&\mapsto
((-1)^{\alpha_1} x_{\sigma(1)},\ldots,
(-1)^{\alpha_{10}} x_{\sigma(10)},
(-1)^\beta y_{\tau(1)},(-1)^\beta y_{\tau(2)}),
\end{split}
\end{equation}
where $\alpha_1,\ldots,\alpha_{10},\beta \in \{0,1\}$
and $\sigma \in S_{10}, \tau \in S_2$
(see~\cite[Remark~5.3]{Shimada}).
Here, $S_n$ denotes the symmetric group of degree $n$.

The main aim of this note is to give the following
classification, which is based on a computer calculation.

\begin{thm}\label{thm}
Any ternary $[12,5]$ code satisfying the conditions
{\rm (\ref{C1})--(\ref{C3})} is SZ-equivalent to
one of the seven codes given in {\rm \cite[Remark~5.3]{Shimada}}.
\end{thm}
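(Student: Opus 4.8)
The plan is to reduce the classification of the twelve-coordinate codes $C$ to a classification of their projections onto the first ten coordinates. Writing $q(x)=\sum_{i=1}^{10}x_i^2$ for $x=(x_1,\dots,x_{10})\in\FF_3^{10}$, I would first record the elementary identity $\wt(x)\equiv q(x)\pmod 3$, valid because $x_i^2$ equals $0$ or $1$ according as $x_i=0$ or $x_i\neq0$. Under this identity, condition (\ref{C1}) asserts that $q(x)\equiv y_1y_2\pmod 3$ for every codeword $(x_1,\dots,x_{10},y_1,y_2)\in C$. Next I would use (\ref{C2}): if a nonzero codeword had $x_1=\dots=x_{10}=0$, its first ten coordinates would have weight $0<3$, so the projection $\pi\colon C\to\FF_3^{10}$, $(x,y_1,y_2)\mapsto x$, is injective. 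Hence $D:=\pi(C)$ is a ternary $[10,5]$ code and $C$ is the graph of a linear map $\phi=(y_1,y_2)\colon D\to\FF_3^2$; that is, $C=\{(x,y_1(x),y_2(x)):x\in D\}$ with $y_1,y_2$ linear functionals on $D$.

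In these terms the three conditions become conditions on the pair $(D,\phi)$: (\ref{C2}) says $d(D)\ge 3$; (\ref{C1}) says $q|_D=y_1y_2$ as quadratic forms on $D$ (the two sides are homogeneous of degree $2$ and agree as functions on $D$, hence as forms); and (\ref{C3}) says that no weight-$3$ codeword of $D$ lies in $\ker\phi$. The factorization $q|_D=y_1y_2$ forces $q|_D$ to have rank at most $2$, equivalently $\dim(D\cap D^{\perp})\ge 3$, where $\perp$ is taken for the standard inner product, since the radical of $q|_D$ is exactly $D\cap D^{\perp}$. Moreover $\ker\phi$ is then self-orthogonal of dimension at least $3$, and by (\ref{C3}) it contains no vector of weight $3$; as self-orthogonal ternary codes have weights divisible by $3$, its minimum weight is at least $6$. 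I would use these constraints, together with the Griesmer bound (which already gives $d(D)\le 5$ for a ternary $[10,5]$ code), to show $3\le d(D)\le 4$; ruling out $d(D)=5$ is the step where I expect to need the self-orthogonal subcode $\ker\phi$ most delicately, or else a direct check that no $[10,5,5]$ code carries a map $\phi$ satisfying (\ref{C1}) and (\ref{C3}).

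With the bound $d(D)\in\{3,4\}$ in hand, the remaining work is computational and proceeds as announced in the abstract. First I would classify all ternary $[10,5]$ codes of minimum weight $3$ and of minimum weight $4$ up to monomial equivalence. For each representative $D$ I would then enumerate the linear maps $\phi=(y_1,y_2)$ with $y_1y_2=q|_D$, noting that when $q|_D$ has rank $2$ this factorization is unique up to scaling and interchange of the factors, while the lower-rank cases are handled separately; I would discard those violating (\ref{C3}) and reduce the survivors modulo the residual symmetries, namely $\Aut(D)$ acting on the first ten coordinates together with the interchange $y_1\leftrightarrow y_2$ and the simultaneous sign change $(y_1,y_2)\mapsto(-y_1,-y_2)$ coming from $\tau$ and $\beta$ in (\ref{eq:eq}). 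Each resulting orbit yields one SZ-equivalence class of codes $C$, and distinct monomial classes of $D$ yield distinct classes of $C$ since $D$ is recovered from $C$ as the projection. Assembling these orbits should produce exactly seven classes, which one finally matches against the seven codes of \cite{Shimada}. The main obstacle is twofold: securing the upper bound $d(D)\le 4$ rigorously, and carrying out the orbit count under the nonstandard SZ-equivalence---which permutes and signs the ten coordinates freely but tightly constrains the last two---without over- or under-counting.
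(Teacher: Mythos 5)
Your overall strategy---project onto the first ten coordinates, observe via (\ref{C2}) that the projection is injective so that $C$ is the graph of a linear map $\phi$ on a ternary $[10,5]$ code $D$, classify the possible $D$ up to monomial equivalence, and then enumerate the admissible lifts modulo the residual symmetries---is exactly the paper's reduction (its Lemmas~\ref{lem:pun}, \ref{lem:d} and \ref{lem:puneq}), and your structural observations about $q|_D=y_1y_2$ forcing $\dim(D\cap D^\perp)\ge 3$ and about $\ker\phi$ being self-orthogonal with minimum weight at least $6$ are correct and rather more conceptual than anything in the paper.

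However, there is a concrete error that would derail the classification: you propose to prove $d(D)\le 4$, i.e.\ to rule out $d(D)=5$, and this is false. The unique ternary $[10,5,5]$ code $C_{10,5}$ \emph{does} carry a map $\phi$ satisfying (\ref{C1}) and (vacuously, since it has no weight-$3$ words) (\ref{C3}); the resulting code is $\cC_{12,1}$, which is SZ-equivalent to ${\mathcal C}_7$ of Shimada--Zhang and has weight enumerator $1+72x^5yz+60x^6+90x^8yz+20x^9$. Your ``direct check'' would therefore not confirm the bound but refute it, and if you nonetheless discarded the $d(D)=5$ case you would find only six SZ-equivalence classes instead of seven. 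Note also that none of your structural constraints obstruct this case: $C_{10,5}$ has hull of dimension at least $3$, and the corresponding $\ker\phi$ is a self-orthogonal code of minimum weight $6$, consistent with everything you derived. The correct statement, which the paper proves, is $d(D)\in\{3,4,5\}$, with the three cases contributing $1$, $2$ and $4$ classes respectively; your argument should be amended to treat $d(D)=5$ as a genuine (and in fact the easiest) case rather than an obstruction to be eliminated.
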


To complete the above classification,
ternary $[10,5,d]$ codes are classified for 
the cases $d=3$ and $4$.

\section{Characterization of ternary $[12,5]$ codes 
satisfying (\ref{C1})--(\ref{C3})}\label{Sec:pre}

Let $C$ be a ternary $[n,k]$ code.
The code obtained from $C$ by deleting some coordinates $I$
in each codeword is called the {\em punctured code} of $C$ on $I$.
Throughout this note, we denote
the punctured code of a ternary  $[12,5]$ code
$C$ on $\{11,12\}$ by $Pun(C)$.
Let $d_{max}(n,k)$ denote the largest minimum weight among
ternary $[n,k]$ codes.
It is known that $d_{max}(10,5)=5$ and 
$d_{max}(12,5)=6$ (see~\cite{Brouwer-Handbook}, \cite{Grassl}).

\begin{lem}\label{lem:pun}
If $C$ is a ternary  $[12,5]$ code satisfying the condition
{\rm (\ref{C2})},
then $Pun(C)$ is a
ternary $[10,5]$ code and $d(Pun(C)) \in \{3,4,5\}$.
\end{lem}
\begin{proof}
Suppose that $Pun(C)$ has dimension at most $4$.
Then we may assume without loss of generality that
$C$ has generator matrix whose first row is 
$(0,0,\ldots,0,y_1,y_2)$, where
$(y_1,y_2) \ne (0,0)$.
This contradicts with the condition (\ref{C2}).
Hence, $Pun(C)$ is a ternary $[10,5]$ code.
Again, by the condition (\ref{C2}), 
$Pun(C)$ has minimum weight at least $3$.
Since $d_{max}(10,5)=5$, the result follows.
\end{proof}

\begin{lem}\label{lem:d}
Let $C$ be a ternary  $[12,5]$ code satisfying the conditions
{\rm (\ref{C1})--(\ref{C3})}.
\begin{itemize}
\item[\rm (i)]   $d(Pun(C))\in \{4,5\}$ if and only if $d(C)=6$.
\item[\rm (ii)]  $d(Pun(C))=3$ if and only if $d(C) =4$.
\end{itemize}
\end{lem}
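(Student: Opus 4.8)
The plan is to analyze how the weight of a codeword $c = (x_1,\ldots,x_{10},y_1,y_2) \in C$ relates to the weight of its image $\bar c = (x_1,\ldots,x_{10})$ in $Pun(C)$, using condition~(\ref{C1}) to control the contribution of the last two coordinates. First I would observe that for any codeword, $\wt(c) = \wt(\bar c) + \wt((y_1,y_2))$, where $\wt((y_1,y_2)) \in \{0,1,2\}$. The key leverage is that~(\ref{C1}) forces $\wt(\bar c) \equiv y_1 y_2 \pmod 3$. The product $y_1 y_2$ is nonzero exactly when both $y_1,y_2 \neq 0$, i.e. when $\wt((y_1,y_2))=2$; otherwise $y_1 y_2 = 0$. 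So~(\ref{C1}) ties the residue class of $\wt(\bar c)$ to whether the last two coordinates are both nonzero, and this is the engine that pins down $d(C)$ from $d(Pun(C))$.

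For the forward directions, I would argue by taking a minimum-weight codeword $\bar c$ of $Pun(C)$ and a codeword $c \in C$ mapping to it, then bounding $\wt(c)$ from below; conversely I would take a minimum-weight $c \in C$ and examine $\wt(\bar c)$. The value $d(Pun(C)) \in \{3,4,5\}$ is already guaranteed by Lemma~\ref{lem:pun}, so the three cases partition cleanly. In the case $d(Pun(C)) = 3$, any weight-$3$ codeword $\bar c$ satisfies $3 \equiv y_1 y_2$, so $y_1 y_2 \equiv 0$; combined with~(\ref{C3}), which says $(y_1,y_2) \neq (0,0)$ whenever $\wt(\bar c)=3$, this forces $\wt((y_1,y_2)) = 1$, giving a weight-$4$ codeword in $C$ and showing $d(C) \le 4$. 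I would then need to rule out $d(C) = 3$, i.e. show no codeword of $C$ has weight $3$: a weight-$3$ codeword would have $\wt(\bar c) + \wt((y_1,y_2)) = 3$, and checking the possibilities for $\wt((y_1,y_2))$ against~(\ref{C1}),~(\ref{C2}),~(\ref{C3}) should eliminate each, yielding $d(C) = 4$ exactly. For the case $d(Pun(C)) \in \{4,5\}$, a symmetric analysis should give $d(C) = 6$: every nonzero $\bar c$ has weight $\ge 4$, and I would show that whenever $\wt((y_1,y_2)) < 2$ the residue constraint~(\ref{C1}) pushes $\wt(\bar c)$ up to at least $6 - \wt((y_1,y_2))$, while the case $\wt((y_1,y_2)) = 2$ adds enough to reach weight $6$, with attainability handled by exhibiting or invoking a suitable codeword.

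Since the two stated equivalences are logically complementary---$d(Pun(C))$ is either $3$, or in $\{4,5\}$, and correspondingly $d(C)$ is either $4$ or $6$---I expect the cleanest route is to prove just one direction of each biconditional and then deduce the reverse implications by this exhaustive dichotomy. Concretely, once I establish ``$d(Pun(C))=3 \Rightarrow d(C)=4$'' and ``$d(Pun(C)) \in \{4,5\} \Rightarrow d(C)=6$,'' the converses follow because the hypotheses on the left exhaust all cases (by Lemma~\ref{lem:pun}) and the conclusions on the right are mutually exclusive. The main obstacle will be the careful case analysis on $\wt((y_1,y_2)) \in \{0,1,2\}$ and its interaction with the mod-$3$ condition~(\ref{C1}): I must be sure that in the $d(Pun(C)) \in \{4,5\}$ case, no codeword with small $\wt((y_1,y_2))$ can have total weight below $6$, which requires checking that the residue of $\wt(\bar c)$ modulo $3$ cannot permit $\wt(\bar c) = 4$ or $5$ together with a small tail without violating~(\ref{C1}). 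Establishing that the bound $d(C)=6$ is actually attained (rather than merely $d(C) \ge 6$, noting $d_{max}(12,5)=6$ caps it from above) is the delicate endpoint of that argument.
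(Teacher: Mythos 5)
Your proposal is correct and follows essentially the same route as the paper: decompose $\wt(c)=\wt(\bar c)+\wt((y_1,y_2))$, use (\ref{C1}) to force $\wt((y_1,y_2))=2$ exactly when $\wt(\bar c)\not\equiv 0\pmod 3$, combine with (\ref{C3}) in the weight-$3$ case, and invoke $d_{max}(12,5)=6$ for attainability. The only (harmless) difference is that you obtain the converse implications by the exhaustiveness/mutual-exclusivity dichotomy, whereas the paper argues them directly from $d_{max}(10,5)=5$ and condition (\ref{C2}); both are valid and of comparable length.
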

\begin{proof}
By Lemma~\ref{lem:pun},
$Pun(C)$ is a
ternary $[10,5]$ code and $d(Pun(C)) \in \{3,4,5\}$.
It is trivial that $d(C)-d(Pun(C)) \in \{0,1,2\}$.

Suppose that $d(Pun(C))\in \{4,5\}$.
Let $x=(x_1,\ldots,x_{10})$ be a codeword of $Pun(C)$.
If $\wt(x)=4$ (resp.\ $5$), then any corresponding codeword  
$(x_1,\ldots,x_{10},y_1,y_2)$ of $C$
has weight $6$ (resp.\ $7$), by the condition (\ref{C1}).
Since $d_{max}(12,5)=6$, we have that $d(C)=6$.
Conversely, if $d(C)=6$, then it follows from 
$d_{max}(10,5)=5$ that $d(Pun(C))\in \{4,5\}$.

Suppose that $d(Pun(C))=3$.
Let $x=(x_1,\ldots,x_{10})$ be a codeword of $Pun(C)$.
If $\wt(x)=3$, then any corresponding codeword 
$(x_1,\ldots,x_{10},y_1,y_2)$ of $C$
has weight $4$, by the conditions (\ref{C1}) and (\ref{C3}).
Hence, we have that  $d(C) =4$.
Conversely, suppose that $d(C)=4$.  
Then $d(Pun(C)) \in \{2,3,4\}$.  By the condition (\ref{C2}),
$d(Pun(C)) \in \{3,4\}$.  From the statement (i),
$d(Pun(C)) = 3$. 
\end{proof}

Recall that
two ternary codes are equivalent if one can be obtained from the
other by permuting the coordinates and (if necessary) changing
the signs of certain coordinates.
For ternary $[10,5]$ codes, we consider this
usual equivalence.

\begin{lem}\label{lem:puneq}
Let $C$ and $C'$ be ternary  $[12,5]$ 
codes satisfying the conditions
{\rm (\ref{C1})--(\ref{C3})}.
Suppose that 
$C$ and $C'$ are SZ-equivalent.
Then $Pun(C)$ and $Pun(C')$  are equivalent.
\end{lem}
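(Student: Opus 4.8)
The SZ-equivalence from equation (\ref{eq:eq}) is built from three ingredients: a permutation $\sigma \in S_{10}$ of the first ten coordinates, sign changes $(-1)^{\alpha_i}$ on each of the first ten coordinates, and then a separate action on the last two coordinates $(y_1,y_2)$ governed by $\beta$ and $\tau \in S_2$. My goal is to observe that when I puncture on $\{11,12\}$, all the data concerning $y_1,y_2$ (namely $\beta$ and $\tau$) simply disappears, and what survives is exactly a permutation together with sign changes on the ten surviving coordinates — which is precisely the usual equivalence of ternary $[10,5]$ codes recalled just before the statement.

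First I would take an arbitrary codeword $c=(x_1,\ldots,x_{10},y_1,y_2) \in C$ and an arbitrary SZ-equivalence $\phi$ as in (\ref{eq:eq}), and write down $\phi(c)$ explicitly. Then I would apply the puncturing map, which discards the last two coordinates, obtaining
\[
Pun(\phi(c)) = \bigl((-1)^{\alpha_1} x_{\sigma(1)},\ldots,(-1)^{\alpha_{10}} x_{\sigma(10)}\bigr).
\]
The key point to verify is that this equals $\psi(Pun(c))$, where $\psi$ is the usual-equivalence map on $\FF_3^{10}$ determined by the same $\sigma$ and $\alpha_1,\ldots,\alpha_{10}$; this is immediate since $Pun(c)=(x_1,\ldots,x_{10})$ and $\psi$ applies exactly the same permutation and sign pattern. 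Since $C'=\phi(C)$ by hypothesis, puncturing gives $Pun(C') = Pun(\phi(C)) = \psi(Pun(C))$, so $\psi$ exhibits the required equivalence between $Pun(C)$ and $Pun(C')$.

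I expect this argument to be essentially bookkeeping, with no serious obstacle: the only subtlety is confirming that puncturing commutes with the surviving part of the equivalence and that the $\beta,\tau$ data genuinely acts only on the deleted coordinates, so it cannot interfere. The map $\psi$ built from $\sigma$ and the $\alpha_i$ is by definition a usual equivalence on $\FF_3^{10}$, so once the commutation $Pun \circ \phi = \psi \circ Pun$ is checked, the conclusion that $Pun(C)$ and $Pun(C')$ are equivalent follows directly.
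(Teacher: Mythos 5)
Your argument is correct and is essentially the same as the paper's proof: the paper likewise observes that puncturing on $\{11,12\}$ discards the $\beta,\tau$ data and leaves exactly the map $(x_1,\ldots,x_{10}) \mapsto ((-1)^{\alpha_1} x_{\sigma(1)},\ldots,(-1)^{\alpha_{10}} x_{\sigma(10)})$, which is a usual equivalence of ternary $[10,5]$ codes. Your version merely spells out the commutation $Pun \circ \phi = \psi \circ Pun$ in more detail.
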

\begin{proof}
Suppose that $C$ is obtained from $C'$
by (\ref{eq:eq}).
Then $Pun(C)$ can be obtained from  $Pun(C')$ by 
$
(x_1,\ldots,x_{10}) \mapsto
((-1)^{\alpha_1} x_{\sigma(1)},\ldots,
(-1)^{\alpha_{10}} x_{\sigma(10)}).
$
\end{proof}

By considering the inverse operation of puncturing,
one can construct ternary  $[12,5]$ 
codes satisfying the conditions
(\ref{C1})--(\ref{C3}) as follows.
Throughout this note, we denote the ternary code 
having generator matrix $G$ by $C(G)$.
Suppose that $C(G)$ is a 
ternary $[10,5]$ code and $d(C(G)) \in \{3,4,5\}$.
Let $g_i$ denote the $i$th row of $G$.
Consider the following generator matrix:
\begin{equation}\label{eq:gmat}
\left(\begin{array}{ccccc|cc}
 &    &  &  & & a_1 & b_1\\
 &    & G&  & & \vdots & \vdots\\
 &    &  &  & & a_5 & b_5\\
\end{array}\right),
\end{equation}
where
\[
(a_i,b_i)=
\begin{cases}
(0,0),(0,1),(0,2),(1,0),(2,0) &\text{ if } \wt(g_i)\equiv 0 \pmod 3,\\
(1,1),(2,2)                   &\text{ if } \wt(g_i)\equiv 1 \pmod 3,\\
(1,2),(2,1)                   &\text{ if } \wt(g_i)\equiv 2 \pmod 3.
\end{cases}
\]
We denote this generator matrix
by $G(a,b)$, where $a=(a_1,\ldots,a_5)$ and $b=(b_1,\ldots,b_5)$.
The set of the codes $C(G(a,b))$ contains all
ternary $[12,5]$ codes $C$ satisfying the conditions
(\ref{C1}) and $Pun(C(G(a,b)))=C(G)$.
Hence, in this way, every 
ternary  $[12,5]$ code satisfying the conditions
(\ref{C1})--(\ref{C3}) can be obtained from some ternary 
$[10,5]$ code.
Here, by Lemma~\ref{lem:d}, its minimum weight is $3,4$ or $5$.
In addition, if $C(G)$ and $C(G')$ are equivalent $[10,5]$
codes, then
the sets of all codes $C(G(a,b))$ 
satisfying the conditions (\ref{C1})--(\ref{C3}) 
is obtained from the set of all codes $C(G'(a,b))$
satisfying the same conditions
by considering (\ref{eq:eq}) with $\beta=0$ and $\tau$ is the identity
permutation.
Hence, it is sufficient to consider only inequivalent 
ternary $[10,5,d]$ codes with $d \in \{3,4,5\}$
for the classification of 
ternary  $[12,5]$ codes satisfying the conditions
(\ref{C1})--(\ref{C3}).
This is a reason why we consider
the classification of 
ternary $[10,5,d]$ codes with $d \in \{3,4,5\}$
in the next section.

\section{Ternary $[10,5,d]$ codes with $d \in \{3,4,5\}$}
\label{sec:10}

There is a unique ternary $[10,5,5]$ code, up to equivalence~\cite{GS}.
In this section, we give a classification of 
ternary $[10,5,d]$ codes with 
$d \in \{3,4\}$, which is based on a computer calculation.

We describe how ternary $[10,5,3]$ codes and $[10,5,4]$ codes
were classified.  
Let $C$ be a ternary $[10,5,3]$ code
(resp.\ $[10,5,4]$ code).
We may assume without loss of generality that
$C$ has generator matrix 
of the following form:
\[
G=
\left(\begin{array}{cccccc}
 &I_{5}&  & A &\\
\end{array}\right),
\]
where $A$ is a $5 \times 5$ matrix over $\FF_3$
and $I_5$ denotes the identity matrix of order $5$.
Thus, we only need consider the set of $A$,
rather than the set of generator matrices.
The set of matrices $A$ was constructed, row by row, 
as follows, by a computer calculation.
Let $r_i$ be the $i$th row of $A$.
Then, we may assume without loss of generality that
$r_1=(0,0,0,1,1)$ (resp.\ $r_1=(0,0,1,1,1)$), 
by permuting and (if necessary) changing
the signs of the columns of $A$.

Let $e_1,\ldots,e_5$ denote the vectors $(1,0,0,0,0),\ldots,
(0,0,0,0,1)$, respectively.
We denote the ternary code
generated by vectors $y_1,y_2,\ldots,y_s$
by $\langle y_1,y_2,\ldots,y_s \rangle$.
For $x=(x_1,\ldots,x_5) \in \FF_3^5$, consider
the following conditions:
\begin{itemize}
\item
the first nonzero element of $x$ is $1$,

\item
$\wt(x)\ge 2$ (resp.\ $\wt(x)\ge 3$),

\item
the ternary code $\langle (e_1,r_1),(e_2,x) \rangle$
has minimum weight $3$ (resp.\ $4$),

\item
$x_{1} \le x_{2} \le x_{3} \le 1$ and $x_{4} \le x_{5}$
(resp.\ 
$x_{1} \le x_{2} \le 1$ and $x_{3} \le x_{4} \le x_{5}$),
where we consider a natural order on 
the elements of $\FF_3=\{0,1,2\}$ by $0<1<2$.
\end{itemize}
The determination of the minimum weights
was done by a computer calculation for all codes in this note.
Let $X_1$ be the set of vectors $x \in \FF_3^5$ 
satisfying the first three conditions.
Let $X_2$ be the set of vectors $x \in X_1$ 
satisfying the fourth condition.
Our computer calculation shows that
$(\#X_1,\#X_2) = (115,18)$
(resp.\ $(88,14)$).
Define a lexicographical order on 
$X_1$ induced by the above order of $\FF_3$,
that is, $(a_1,\dots,a_5) < (b_1,\dots,b_5)$ 
if $a_1 < b_1$, or 
$a_1=b_1,\dots,a_{k}=b_{k}$ and $a_{k+1}<b_{k+1}$ 
for some $k \in \{1,2,3,4\}$.
The matrices $A$ were constructed, row by row, 
satisfying 
the following conditions:
\begin{itemize}
\item 
the ternary code
$\langle (e_s,r_s) \mid s=1,2,3 \rangle$ 
has minimum weight $3$ (resp.\ $4$), where 
$r_2 \in X_2,\ r_3 \in X_1$,

\item 
the ternary code
$\langle (e_s,r_s) \mid s=1,2,3,4 \rangle$ 
has minimum weight $3$ (resp.\ $4$), where 
$r_2 \in X_2,\ r_3,r_4 \in X_1\ (r_3 < r_4)$,

\item 
the ternary code
$\langle (e_s,r_s) \mid s=1,2,3,4,5 \rangle$
has minimum weight $3$ (resp.\ $4$), where 
$r_2 \in X_2,\ 
r_3, r_4, r_5 \in X_1\ (r_3<r_4<r_5)$.
\end{itemize}
It is obvious that the set of the matrices $A$
which must be checked to achieve
a complete classification, can be obtained in this way.

Then, by a computer calculation,
we found $4328352$ (resp.\ $650051$) matrices $A$.
Our computer calculation shows the $4328352$ ternary $[10,5,3]$ codes 
(resp.\ $650051$ ternary $[10,5,4]$ codes)
are divided into $527$ (resp.\ $64$)
classes by comparing their Hamming weight enumerators.
For each Hamming weight enumerator,
to test equivalence of codes, we use the
algorithm given in~\cite[Section 7.3.3]{KO} as follows.
For a ternary $[n,k]$ code $C$, define 
the digraph $\Gamma(C)$ with vertex set 
\[
(C-\{\0\}) \cup (\{1,2,\dots,n\}\times (\FF_3-\{0\})) 
\]
and arc set 
\begin{align*}
&
\{(c,(j,c_j)) \mid c=(c_{1},\ldots,c_{n}) \in C-\{\0\},  c_j \ne 0, 
1 \le j \le n\} 
\\ &
\cup \{((j,1),(j,2)),((j,2),(j,1)) \mid 1 \le j \le n\}.
\end{align*}
Then, two ternary $[n,k]$ codes $C$ and $C'$ are equivalent
if and only if $\Gamma(C)$ and $\Gamma(C')$  are isomorphic.
We use the package {\tt GRAPE}~\cite{GRAPE}
of {\tt GAP}~\cite{GAP4} for digraph isomorphism testing.
After checking whether codes are equivalent or not 
by a computer calculation for each
Hamming weight enumerator, we have the following:

\begin{prop}
There are $135$ ternary $[10,5,4]$ codes, up to equivalence.
There are $1303$ ternary $[10,5,3]$ codes, up to equivalence.
\end{prop}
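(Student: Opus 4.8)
The plan is to prove the two counts by an exhaustive but carefully pruned enumeration of generator matrices in standard form, followed by an exact equivalence reduction. First I would observe that every ternary $[10,5,d]$ code has an information set among its ten coordinates, so after a coordinate permutation it admits a generator matrix of the form $G=(I_5\mid A)$ with $A$ a $5\times 5$ matrix over $\FF_3$; since coordinate permutations and sign changes are equivalence operations, it suffices to classify the matrices $A$. The key point is that the operations preserving the equivalence class act on $(I_5\mid A)$ in two harmless ways: permuting and rescaling the last five columns together with the matching information columns (an equivalence of codes), and permuting or rescaling the rows of $G$ (a basis change that does not alter the code at all). This is exactly the freedom that allows normalization.

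Using this, I would fix the first row $r_1$ to the single representative $(0,0,0,1,1)$ (resp.\ $(0,0,1,1,1)$) reachable by column operations, then build $A$ one row at a time. At each stage I would impose that the partial code generated by the rows chosen so far already has minimum weight $d$; this both prunes the search tree aggressively and guarantees the final code has minimum weight exactly $d$, since it is at least $d$ by construction and the first generator row $(e_1,r_1)$ is itself a codeword of weight exactly $d$. To avoid generating the same class many times I would require $r_2$ to lie in the normalized set $X_2$ and the remaining rows to lie in $X_1$ in strictly increasing lexicographic order, the increasing order being justified by the free reordering of rows. I expect this to yield the stated $4328352$ (resp.\ $650051$) candidate matrices, a list which by construction meets every equivalence class of $[10,5,3]$ (resp.\ $[10,5,4]$) codes at least once.

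The final and hardest step is to collapse these millions of candidates down to equivalence classes. Pairwise equivalence testing is infeasible at this scale, so I would proceed in two stages. First I would compute the Hamming weight enumerator of each code; since this is an equivalence invariant, codes with different enumerators are certainly inequivalent, and this partitions the candidates into $527$ (resp.\ $64$) buckets that can be treated independently. Within each bucket I would decide equivalence exactly using the reduction of code equivalence to isomorphism of the digraph $\Gamma(C)$ described above, computing a canonical form with {\tt GRAPE}/{\tt GAP} so that each class is counted once. Summing the class counts over all buckets then gives $135$ inequivalent $[10,5,4]$ codes and $1303$ inequivalent $[10,5,3]$ codes.

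The main obstacle I anticipate is precisely this equivalence reduction: the raw candidate lists are large enough that the graph-isomorphism computations must be organized per weight-enumerator bucket to remain tractable, and one must rely on the correctness of the canonical-form computation. A secondary subtlety, which I would verify with care, is the completeness of the normalized enumeration---that fixing $r_1$, restricting $r_2$ to $X_2$, and imposing the lexicographic order on the later rows really does retain a representative of every equivalence class rather than accidentally discarding one.
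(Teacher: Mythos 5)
Your proposal follows essentially the same route as the paper: reduce to matrices $A$ in $(I_5\mid A)$, normalize $r_1$ and restrict $r_2$ to $X_2$ and the later rows to $X_1$ in increasing lexicographic order with minimum-weight pruning at each stage, then separate the resulting $4328352$ (resp.\ $650051$) candidates by Hamming weight enumerator into $527$ (resp.\ $64$) buckets and finish with the digraph-isomorphism test via {\tt GRAPE}/{\tt GAP}. This matches the paper's computation step for step, including the final counts of $135$ and $1303$.
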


We denote the $135$ ternary $[10,5,4]$ codes by
$C_{10,4,i}$ $(i=1,2,\ldots,135)$, and 
we denote the $1303$ ternary $[10,5,3]$ codes by
$C_{10,3,i}$ $(i=1,2,\ldots,1303)$.
Generator matrices of all codes 
can be obtained electronically from~\cite{Araya}.

The unique ternary $[10,5,5]$ code $C_{10,5}$  is 
formally self-dual,
that is, the Hamming weight enumerators of the code and its
dual code are identical.
In addition, the supports of the codewords of minimum weight in 
$C_{10,5}$  form a $3$-design~\cite{DGH}.
We verified by a computer calculation that 
$38$ ternary $[10,5,4]$ codes and 
$242$ ternary $[10,5,3]$ codes are formally self-dual.
In addition, we verified by a computer calculation that 
the supports of the codewords of minimum weight in 
only the code $C_{10,4,132}$ form a $2$-design and
the supports of the codewords of minimum weight in 
$C_{10,4,i}$ form a $1$-design
for only $i=6,86,87,89,132$.

\section{Ternary  $[12,5]$ codes satisfying (\ref{C1})--(\ref{C3})}

In this section,
we give a classification of
ternary $[12,5]$ codes satisfying the conditions
{\rm (\ref{C1})--(\ref{C3})}, which is based on a computer calculation.
This is obtained from the classification of 
ternary $[10,5,d]$ codes with $d \in \{3,4,5\}$,
by using the method given in Section~\ref{Sec:pre}.

\subsection{From the $[10,5,5]$ code and the $[10,5,4]$ codes}

As described in the previous section,
there is a unique ternary $[10,5,5]$ code, up to equivalence~\cite{GS}.
It follows from~\cite{DGH} that
this code $C_{10,5}$ has generator matrix 
$G_{10,5}=
\left(\begin{array}{cccccc}
I_{5} &  A \\
\end{array}\right)$,
where $A$ is the following circulant matrix:
\[
A=
\left(\begin{array}{cccc}
12210\\
01221\\
10122\\
21012\\
22101\\
\end{array}\right).
\]
In order to construct all ternary $[12,5]$ codes $C$ 
satisfying the conditions (\ref{C1}) and $Pun(C)=C_{10,5}$,
we consider generator matrices $G_{10,5}(a,b)$ of the form (\ref{eq:gmat}).
Since the weight of each row of $G_{10,5}$ is $5$, 
$(a_i,b_i)=(1,2)$ or $(2,1)$ for $i=1,2,3,4,5$.
By (\ref{eq:eq}), we may assume that $(a_1,b_1)=(1,2)$.
Since the weight of the sum of the first row and the second row
of $G_{10,5}$ is $5$, $(a_2,b_2)$ must be $(1,2)$.
Similarly, we have that 
$(a_i,b_i)=(1,2)$ for $i=3,4,5$, 
since $A$ is circulant.
In addition, we verified by a computer calculation that this code 
satisfies the condition {\rm (\ref{C1})}.
Note that the code automatically satisfies 
the conditions (\ref{C2}) and (\ref{C3}).
We denote the code by $\cC_{12,1}$.

Now, consider the ternary $[10,5,4]$ codes $C_{10,4,i}$ $(i=1,2,\ldots,135)$.
By considering generator matrices of the form (\ref{eq:gmat}),
we found all ternary $[12,5]$ codes $C$ 
satisfying the conditions (\ref{C1}) and $Pun(C)=C_{10,4,i}$.
This was done by a computer calculation.
We denote by $G_{10,4,i}$ the generator matrix 
$\left(\begin{array}{cccccc}
 I_{5}&  A \\
\end{array}\right)$
of $C_{10,4,i}$ for each $i$.
Since the weight of the first row of $A$ is $3$ (see Section~\ref{sec:10}),
by (\ref{eq:eq}), we may assume that $(a_1,b_1)=(1,1)$ in (\ref{eq:gmat}).
Under this situation, 
we verified by a computer calculation that only the codes
$C_{10,4,60}$ and $C_{10,4,132}$ give
ternary $[12,5]$ codes satisfying the condition (\ref{C1}).
Note that these codes automatically satisfy 
the conditions (\ref{C2}) and (\ref{C3}).
In Table~\ref{Tab:d4mat}, we list the matrices $A$ and 
$(a^T,b^T)$ in $G_{10,4,i}(a,b)$ for $i=60,132$,
where $a^T$ denotes the transposed of a vector $a$.
It can be seen by hand that 
the two codes $C(G_{10,4,60}(a,b))$ are SZ-equivalent.
By Lemma~\ref{lem:puneq},
there are two ternary $[12,5]$ codes $C$
satisfying the conditions {\rm (\ref{C1})--(\ref{C3})}
and the condition that $Pun(C)$ is a ternary $[10,5,4]$ code.
We denote the two codes by $\cC_{12,2}$ and $\cC_{12,3}$,
respectively (note that take the first $(a^T,b^T)$ for $i=60$).

\begin{table}[thb]
\caption{Generator matrices $G_{10,4,i}(a,b)$  $(i=60,132)$}
\label{Tab:d4mat}
\begin{center}
{\footnotesize
\begin{tabular}{c|c|cc}
\noalign{\hrule height0.8pt}
$i$& $A$ & \multicolumn{2}{c}{$(a^T,b^T)$} \\
\hline
&&&\\
60 &
$\left(\begin{array}{c}
0 0 1 1 1\\
0 1 0 1 1\\
1 0 1 0 1\\
1 1 0 0 1\\
1 2 2 1 0
\end{array}\right)$&$\left(\begin{array}{c}
1 1\\
2 2\\
2 2\\
1 1\\
1 2\\
\end{array}\right),$&$\left(\begin{array}{c}
1 1\\
2 2\\
2 2\\
1 1\\
2 1\\
\end{array}\right)$ \\ 
&&&\\
\hline
&&&\\
132 &
$\left(\begin{array}{c}
0 0 1 1 1\\
0 1 0 1 1\\
1 0 1 0 1\\
1 1 0 0 1\\
1 1 1 1 1
\end{array}\right)$&$\left(\begin{array}{c}
1 1\\
2 2\\
2 2\\
1 1\\
0 0\\
\end{array}\right)$ \\
&&&\\
\noalign{\hrule height0.8pt}
\end{tabular}
}
\end{center}
\end{table}

Lemma~\ref{lem:d} shows that there are no other 
ternary $[12,5,6]$ codes
satisfying the conditions {\rm (\ref{C1})--(\ref{C3})}.
Hence, we have the following:

\begin{lem}\label{lem:d6}
Up to SZ-equivalence,
there are three ternary $[12,5,6]$ codes
satisfying the conditions {\rm (\ref{C1})--(\ref{C3})}.
\end{lem}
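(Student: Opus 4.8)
The plan is to deduce the lemma from the reduction results of Section~\ref{Sec:pre} together with the enumeration already performed in this section. First I would note that, by Lemma~\ref{lem:d}(i), a ternary $[12,5]$ code $C$ satisfying {\rm (\ref{C1})--(\ref{C3})} has $d(C)=6$ precisely when $d(Pun(C)) \in \{4,5\}$. So the task reduces to classifying, up to SZ-equivalence, exactly those codes whose punctured code is a $[10,5,4]$ or a $[10,5,5]$ code.

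Next I would invoke the construction of Section~\ref{Sec:pre}: every such code is $C(G(a,b))$ for some $[10,5,d]$ code $C(G)$ with $d \in \{4,5\}$, and it suffices to range over inequivalent representatives of these $[10,5]$ codes, since the sets of extensions of equivalent $[10,5]$ codes are related by (\ref{eq:eq}). Using the classification of Section~\ref{sec:10}, I would run over $C_{10,5}$ and over $C_{10,4,i}$ $(i=1,\dots,135)$ and, for each, collect the $(a,b)$ for which $C(G(a,b))$ satisfies {\rm (\ref{C1})}; here {\rm (\ref{C2})} and {\rm (\ref{C3})} hold automatically, since the punctured code has no codeword of weight less than $4$. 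From the computations recorded above this yields the single code $\cC_{12,1}$ from $C_{10,5}$, while among the $[10,5,4]$ codes only $C_{10,4,60}$ and $C_{10,4,132}$ contribute; the two choices of $(a^T,b^T)$ attached to $C_{10,4,60}$ give SZ-equivalent codes, so we are left with $\cC_{12,2}$ and $\cC_{12,3}$.

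It then remains to confirm that $\cC_{12,1}$, $\cC_{12,2}$, $\cC_{12,3}$ are pairwise SZ-inequivalent, which I would obtain from the contrapositive of Lemma~\ref{lem:puneq}: their punctured codes $C_{10,5}$, $C_{10,4,60}$, $C_{10,4,132}$ are pairwise inequivalent $[10,5]$ codes, so the three $[12,5]$ codes cannot be SZ-equivalent. Hence the total count is exactly three. The only genuinely delicate steps are ensuring the $(a,b)$-enumeration is exhaustive and verifying by hand the single coincidence between the two $C_{10,4,60}$-extensions under (\ref{eq:eq}); everything else is a bookkeeping consequence of Lemma~\ref{lem:puneq}, which simultaneously prevents identifications across distinct punctured classes and certifies the inequivalence of the three final representatives.
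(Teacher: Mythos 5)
Your proposal is correct and follows essentially the same route as the paper: reduce via Lemma~\ref{lem:d}(i) to punctured codes of minimum weight $4$ or $5$, enumerate the extensions $C(G(a,b))$ of $C_{10,5}$ and of the $C_{10,4,i}$ satisfying (\ref{C1}) (with (\ref{C2}) and (\ref{C3}) automatic), identify the two $C_{10,4,60}$-extensions, and use Lemma~\ref{lem:puneq} to separate the three resulting codes. The only difference is that you make explicit the contrapositive use of Lemma~\ref{lem:puneq} for pairwise inequivalence, which the paper leaves implicit.
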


\subsection{From the $[10,5,3]$ codes}

By considering generator matrices of the form (\ref{eq:gmat}),
we found all ternary $[12,5]$ codes $C$ 
satisfying the conditions (\ref{C1}) and 
$Pun(C)=C_{10,3,i}$ $(i=1,2,\ldots,1303)$.
This was done by a computer calculation.
We denote by $G_{10,3,i}$ the generator matrix 
$\left(\begin{array}{cccccc}
 I_{5}&  A \\
\end{array}\right)$
of $C_{10,3,i}$ for each $i$.
Since the weight of the first row of $A$ is $2$ (see Section~\ref{sec:10}),
by (\ref{eq:eq}), we may assume that $(a_1,b_1)=(0,1)$ in (\ref{eq:gmat}).
Under this situation, 
we verified by a computer calculation that only the codes
$C_{10,3,i}$ give
ternary $[12,5]$ codes satisfying the condition (\ref{C1})
for
\begin{align*}
i=&302, 639, 662, 666, 667, 756, 878, 957, 958, 987, \\
&1210, 1215, 1241, 1245, 1263, 1285, 1297, 1298, 1299.
\end{align*}
In this case, there are codes 
satisfying the condition (\ref{C1}), but not (\ref{C3}). 
We verified by a computer calculation that only the codes
$C_{10,3,i}$ give
ternary $[12,5]$ codes satisfying the conditions 
(\ref{C1}) and (\ref{C3}) for $i=302,666,987,1245$.
Note that these four codes automatically satisfy the condition (\ref{C2}).
In Table~\ref{Tab:d3mat}, we list the matrices $A$ and 
$(a^T,b^T)$ in $G_{10,4,i}(a,b)$ for $i=302,666,987,1245$.
By Lemma~\ref{lem:puneq},
there are four ternary $[12,5]$ codes
satisfying the conditions {\rm (\ref{C1})--(\ref{C3})}
and the condition that $Pun(C)$ is a ternary $[10,5,3]$ code.
We denote the four codes by $\cC_{12,i}$ $(i=4,5,6,7)$,
respectively.

\begin{table}[thb]
\caption{Generator matrices $G_{10,3,i}(a,b)$ $(i=302,666,987,1245)$}
\label{Tab:d3mat}
\begin{center}
{\footnotesize
\begin{tabular}{c|c|c||c|c|c}
\noalign{\hrule height0.8pt}
$i$& $A$ &  $(a^T,b^T)$ &
$i$& $A$ &  $(a^T,b^T)$ \\
\hline
&&&&&\\
302 &
$\left(\begin{array}{c}
0 0 0 1 1\\
0 1 1 0 0\\
1 0 1 0 1\\
1 1 0 1 0\\
1 1 2 2 1
\end{array}\right)$&$\left(\begin{array}{c}
0 1\\0 1\\2 2\\2 2\\0 1\\
\end{array}\right)$ &
987&
$\left(\begin{array}{c}
0 0 0 1 1\\
0 0 1 0 1\\
0 1 0 1 0\\
0 1 1 0 0\\
1 1 2 2 1
\end{array}\right)$&$\left(\begin{array}{c}
0 1\\2 0\\2 0\\0 1\\0 0\\
\end{array}\right)$ \\ 
&&&&&\\
\hline
&&&&&\\
666 &
$\left(\begin{array}{c}
0 0 0 1 1\\
0 1 1 0 0\\
1 0 1 0 1\\
1 1 0 1 0\\
1 2 2 2 2
\end{array}\right)$&$\left(\begin{array}{c}
0 1\\0 1\\2 2\\2 2\\2 0\\
\end{array}\right)$ &
1245&
$\left(\begin{array}{c}
0 0 0 1 1\\
0 0 1 0 1\\
0 1 0 1 0\\
0 1 1 0 0\\
0 1 1 1 1
\end{array}\right)$&$\left(\begin{array}{c}
0 1\\2 0\\2 0\\0 1\\1 2\\
\end{array}\right)$ \\
&&&&&\\
\noalign{\hrule height0.8pt}
\end{tabular}
}
\end{center}
\end{table}

Lemma~\ref{lem:d} shows that there are no other 
ternary $[12,5,4]$ codes
satisfying the conditions {\rm (\ref{C1})--(\ref{C3})}.
Hence, we have the following:

\begin{lem}\label{lem:d5}
Up to SZ-equivalence,
there are four ternary $[12,5,4]$ codes
satisfying the conditions {\rm (\ref{C1})--(\ref{C3})}.
\end{lem}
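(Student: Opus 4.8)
The plan is to mirror the structure of Lemma~\ref{lem:d6} but work from the classification of ternary $[10,5,3]$ codes instead of the $[10,5,4]$ codes. By Lemma~\ref{lem:d}(ii), a ternary $[12,5]$ code $C$ satisfying (\ref{C1})--(\ref{C3}) has $d(C)=4$ precisely when $d(Pun(C))=3$. Thus every ternary $[12,5,4]$ code satisfying the conditions arises, via the inverse-puncturing construction of Section~\ref{Sec:pre}, from one of the $1303$ inequivalent ternary $[10,5,3]$ codes $C_{10,3,i}$. The first step is therefore to enumerate, for each $i$, all codes $C(G_{10,3,i}(a,b))$ of the form (\ref{eq:gmat}) that satisfy (\ref{C1}). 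As the excerpt records, fixing $(a_1,b_1)=(0,1)$ (legitimate since the first row of $A$ has weight $2$) and filtering by (\ref{C1}) leaves only the nineteen listed values of $i$.

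Next I would impose the remaining conditions. Among those nineteen candidate base codes, I would discard any code $C(G(a,b))$ failing (\ref{C3}); the computation reported in the excerpt shows that only $i=302,666,987,1245$ survive, and that for these four the condition (\ref{C2}) holds automatically. This yields four ternary $[12,5]$ codes satisfying (\ref{C1})--(\ref{C3}) whose punctured code is a $[10,5,3]$ code. The final step is to verify that these four are pairwise inequivalent under SZ-equivalence. Here I would invoke Lemma~\ref{lem:puneq} in the contrapositive: since $C_{10,3,302}$, $C_{10,3,666}$, $C_{10,3,987}$, $C_{10,3,1245}$ are pairwise inequivalent $[10,5]$ codes (being distinct members of the classification in the Proposition), SZ-equivalence of the corresponding $[12,5]$ codes would force equivalence of their punctured codes, a contradiction. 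Hence the four codes $\cC_{12,i}$ $(i=4,5,6,7)$ are genuinely distinct up to SZ-equivalence, and Lemma~\ref{lem:d} guarantees there are no further ternary $[12,5,4]$ codes.

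The main obstacle is the completeness of the computer search, not the logical packaging: one must be confident that the exhaustive enumeration of all $(a,b)$ over all $1303$ base codes, together with the weight computations used to test (\ref{C1})--(\ref{C3}), has correctly identified every code. The argument of Section~\ref{Sec:pre} certifies that no ternary $[12,5,4]$ code satisfying the conditions can escape this construction, so once the search is trusted the statement follows immediately by combining Lemma~\ref{lem:d} with Lemma~\ref{lem:puneq}.
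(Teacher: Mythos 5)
Your proposal is correct and follows essentially the same route as the paper: enumerate the extensions $C(G_{10,3,i}(a,b))$ with $(a_1,b_1)=(0,1)$ over the $1303$ classified $[10,5,3]$ codes, filter by (\ref{C1}) and (\ref{C3}) to arrive at $i=302,666,987,1245$, use Lemma~\ref{lem:puneq} (in the contrapositive) for pairwise SZ-inequivalence, and Lemma~\ref{lem:d} for completeness. The only detail worth noting is that one should also confirm that each surviving base code yields a single $(a,b)$ (or SZ-equivalent ones), which the computation in Table~\ref{Tab:d3mat} shows is the case here.
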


Up to SZ-equivalence,
seven ternary $[12,5]$ codes 
satisfying the conditions (\ref{C1})--(\ref{C3}) are known
(see~\cite[Remark~5.3]{Shimada}).
Lemmas~\ref{lem:d6} and~\ref{lem:d5} show that there are no other 
ternary $[12,5]$ codes
satisfying the conditions {\rm (\ref{C1})--(\ref{C3})}.
Therefore, we have Theorem~\ref{thm}.

\subsection{Some properties}

For the ternary $[12,5]$ codes $C$
satisfying the conditions {\rm (\ref{C1})--(\ref{C3})},
instead of the Hamming weight enumerators, 
we consider the weight enumerators
$\displaystyle{
\sum_{(x_1,\ldots,x_{10},y_1,y_2) \in C} x^{\wt((x_1,\ldots,x_{10}))}
y^{n_1}z^{n_2}
}$,
where $n_1$ and $n_2$ are the numbers of $1$'s and $2$'s in 
$(y_1,y_2)$, respectively.
We verified by a computer calculation 
that the codes $\cC_{12,i}$ $(i=1,2,\ldots,7)$ have
the following weight enumerators $W_i$:
\begin{align*}
W_1=& 1+ 72 x^5 y z+ 60 x^6 + 90 x^8 y z+ 20 x^9,
\\ 
W_2=&
1+  9x^4 z^2 +  9x^ 4 y^2  + 18x^ 5 y z + 24x^ 6   + 36x^ 6 z + 36x^ 6 y  + 18x^ 7  z^2 
+ 18x^ 7 y^2  
\\ &
+ 36x^ 8 y z +  2x^ 9   + 18x^ 9 z + 18x^ 9 y,
\\ &
1+ 15x^ 4  z^2 + 15x^ 4 y^2  + 60x^ 6   + 60x^ 7  z^2 + 60x^ 7 y^2  + 20x^ 9 + 6x^{10}  z^2 
+  6x^{10} y^2,
\\ &1 +  2x^3z +  2x^3y +  4x^4z^2 +  4x^4y^2 + 24x^5yz + 18x^6 + 38x^6z+ 38x^6y
\\ &
+ 22x^7z^2 + 22x^7y^2 + 30x^8yz +  8x^9 + 14x^9z + 14x^9y
+  x^{10}z^2 +  x^{10}y^2,
\\ &
1+  3x^3z+  3x^3y+  3x^4z^2+  3x^4y^2+ 18x^5yz+ 24x^6+ 39x^6z+ 39x^6y
\\ &
+ 21x^7z^2+ 21x^7y^2+ 36x^8yz+  2x^9+ 12x^9z+ 12x^9y+  3x^{10}z^2+  3x^{10}y^2,
\\ &
1+  4x^3z+  4x^3y+  5x^4z^2+  5x^4y^2+ 24x^5yz+ 18x^6+ 34x^6z+ 34x^6y
\\ &
+ 20x^7z^2+ 20x^7y^2+ 30x^8yz+  8x^9+ 16x^9z+ 16x^9y+  2x^{10}z^2+2x^{10}y^2,
\\ &
1+  6x^3z+  6x^3y+  9x^4z^2+  9x^4y^2+ 36x^5yz+ 24x^6+ 42x^6z+ 42x^6y
\\ &
+ 18x^7z^2+ 18x^7y^2+ 18x^8yz+  2x^9+  6x^9z+  6x^9y,
\end{align*}
respectively.
These weight enumerators guarantee that the codes
$\cC_{12,i}$ $(i=1,2,\ldots,7)$
satisfy the conditions {\rm (\ref{C1})--(\ref{C3})}.
By putting $y=z=1$,
the above weight enumerators
determine the Hamming weight enumerators of $Pun(\cC_{12,i})$ $(i=1,2,\ldots,7)$.
This implies that $\cC_{12,1}$
is SZ-equivalent to 
${\mathcal C}_7$ in~\cite[Table~5.1]{Shimada}.
In addition, 
by comparing generator matrices, it is easy to see that
$\cC_{12,i}$ $(i=2,3,\ldots,7)$ are equal to 
${\mathcal C}_6$,
${\mathcal C}_5$,
${\mathcal C}_3$,
${\mathcal C}_4$,
${\mathcal C}_2$ and 
${\mathcal C}_1$ in~\cite[Table~5.1]{Shimada}, respectively.

\begin{rem}
Shimada and Zhang~\cite{Shimada} also considered 
the existence of ternary $[12,4,6]$ codes 
satisfying the condition that
all codewords have weight divisible by three,
in the proof of Theorem~1.4
(see~\cite[Claim~6.2]{Shimada}).
We point out that a code satisfying the condition
is self-orthogonal.
There is a unique 
self-orthogonal ternary $[12,4,6]$ code,
up to equivalence~\cite[Table~1]{MPS}.
\end{rem}

\bigskip
\noindent {\bf Acknowledgments.}
The authors would like to thank the anonymous referee for useful comments.
This work is supported by JSPS KAKENHI Grant Number 23340021.
In this work, the supercomputer of ACCMS, Kyoto University was partially used.

\end{document}